\begin{document}

\begin{verbatim}\end{verbatim}\vspace{2.5cm}

\begin{frontmatter}

\title{Maximizing the number of vertices of an $N$-clique cover of the edges of a graph on $N$ vertices}

\author{Leopoldo Taravilse}
\address{Departamento de Computaci\'on\\ Universidad de Buenos Aires\\ Buenos Aires, Argentina}

\begin{abstract}
All the work made so far on edge-covering a graph by cliques focus on finding the minimum number of cliques that cover the graph. On this paper, we fix the number of cliques that cover a graph by the same number of vertices that the graph has, and give an upper bound for the sum of the number of vertices of these cliques in the cases where this covering is possible.
\end{abstract}

\begin{keyword}
clique, cover, vertices, graph
\end{keyword}

\end{frontmatter}

\section{Introduction}\label{intro}

Several works have been made on the minimum edge-covering of a graph by a certain family of graphs. On \cite{cohen}, Cohen and Tarsi prove that deciding if a graph $G$ can be edge-covered with graphs isomorphic to a graph $H$ and finding the decomposition is NP-complete. On \cite{orlin}, Orlin presents the problem of edge-covering a graph $G$ by cliques, but he is interested on the minimum number of cliques that can cover the graph. On \cite{pullman} Pullman presentes a survey of all the work made on minimum edge-clique covering so far, and on \cite{roberts} Roberts gives applications to this problem.

\section{Proving the upper bound}

If we want to cover the edges of a graph $G$ of $N$ vertices, with $N$ edge-disjoint cliques, then we can say that this cliques have $V_1, V_2, ..., V_N$ vertices. Note that it is not always possible to cover the edges of a graph of $N$ vertices with $N$ edge-disjoint cliques, but we will consider only the cases where this covering is possible. The number of edges that this cliques have is in total

$$\displaystyle \sum_{i=0}^{N} \frac{V_i(V_i-1)}{2}$$

and we know that number cannot exceed $\frac{N(N-1)}{2}$, that is an upper bound for the number of edges of $G$. What we want to maximize is 

$$\displaystyle \sum_{i=0}^{N} V_i$$

We know that $V_i$ must be integers, but we can give an upper bound for the case $V_i \in \mathbb{R}_{>0}$ and then the bound holds if $V_i$ are integers.

\begin{theorem}
An upper bound $B(N)$ to the sum of the number of vertices of the $N$ cliques covering a graph on $N$ vertices is $Nf^{-1}(N-1)$ where $f(M) = M(M-1)$.
\end{theorem}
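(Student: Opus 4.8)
The plan is to view the statement as a continuous optimization problem: maximize the linear quantity $\sum_{i=1}^{N} V_i$ over positive reals $V_1,\dots,V_N$ subject to the single edge-count constraint $\sum_{i=1}^{N} V_i(V_i-1) \le N(N-1)$, which is the inequality from the excerpt rewritten as $\sum_i f(V_i) \le f(N)$ with $f(M)=M(M-1)$. The engine of the argument will be the convexity of $f$: Jensen's inequality turns a bound on the \emph{sum} $\sum_i f(V_i)$ into a bound on $f$ evaluated at the \emph{average} of the $V_i$, from which the bound on $\sum_i V_i$ follows by inverting $f$.

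Before that I would record two elementary facts about $f(M)=M^2-M$. It is convex on all of $\mathbb{R}$ (its second derivative is the constant $2>0$), and the equation $M(M-1)=N-1$ has two real roots, one $\le 0$ and one $\ge 1$; throughout, $f^{-1}(N-1)$ denotes the larger root, i.e. the unique $M\ge 1$ with $M(M-1)=N-1$, which is precisely the value attained when all the cliques are of equal size.

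With $\bar V := \frac{1}{N}\sum_{i=1}^{N} V_i$, Jensen's inequality applied to the convex $f$ gives
$$f(\bar V) \;=\; f\!\left(\frac{1}{N}\sum_{i=1}^{N} V_i\right) \;\le\; \frac{1}{N}\sum_{i=1}^{N} f(V_i) \;\le\; \frac{1}{N}\,N(N-1) \;=\; N-1.$$
Thus $f(\bar V)\le N-1$. Since $f$ is an upward-opening parabola whose two roots are $1-f^{-1}(N-1)\le 0$ and $f^{-1}(N-1)\ge 1$, the inequality $f(\bar V)\le N-1$ confines $\bar V$ to the closed interval between these roots, and in particular forces $\bar V \le f^{-1}(N-1)$. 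Multiplying through by $N$ yields $\sum_{i=1}^{N} V_i \le N f^{-1}(N-1)=B(N)$, with equality exactly in the equality case of Jensen, namely all $V_i$ equal to $f^{-1}(N-1)$.

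The one place deserving care is the final inversion: $f$ is \emph{not} injective, so ``applying $f^{-1}$ to $f(\bar V)\le N-1$'' is a priori ambiguous. The resolution, as above, is that the upward parabola shape makes the upper estimate $\bar V\le f^{-1}(N-1)$ automatic once we select the larger root, so no separate verification that $\bar V$ lies on the increasing branch of $f$ is needed. Finally, since the whole derivation is carried out for arbitrary positive (indeed arbitrary real) $V_i$, the bound specializes immediately to the integer clique sizes of the original problem, as the excerpt anticipates.
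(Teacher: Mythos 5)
Your proof is correct and takes essentially the same route as the paper: both pass to the average $\bar V$ of the $V_i$ and reduce the problem to bounding that average, and your appeal to Jensen's inequality for the convex $f$ is mathematically the same step as the paper's expansion-and-cancellation down to the Cauchy--Schwarz instance $\left(\sum_i V_i\right)^2 \le N \sum_i V_i^2$ (for a quadratic $f$ the linear terms cancel, so Jensen here \emph{is} that inequality). If anything, your writeup is more careful at the final step, where you note that $f$ is not injective and fix $f^{-1}(N-1)$ as the larger root so that $f(\bar V)\le N-1$ genuinely forces $\bar V \le f^{-1}(N-1)$ --- a point the paper passes over silently.
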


\begin{proof}

As we know that 

\begin{eqnarray*}
\displaystyle \sum_{i=0}^{N} \frac{V_i(V_i-1)}{2} &\leq& \frac{N(N-1)}{2}\\
\displaystyle \sum_{i=0}^{N} V_i(V_i-1) &\leq& N(N-1)
\end{eqnarray*}

must hold, let us replace each $V_i$ with the average of all of them. We can see that 

$$N\frac{\displaystyle \sum_{i=0}^{N} V_i}{N} \left(\frac{\displaystyle \sum_{i=0}^{N} V_i}{N}-1\right) \leq \displaystyle \sum_{i=0}^{N} V_i(V_i-1)$$

holds too. Let us prove this inequality:

$$N\frac{\displaystyle \sum_{i=0}^{N} V_i}{N} \left(\frac{\displaystyle \sum_{i=0}^{N} V_i}{N}-1\right) \leq \displaystyle \sum_{i=0}^{N} V_i(V_i-1)$$
$$\displaystyle \sum_{i=0}^{N} V_i \left(\frac{\displaystyle \sum_{i=0}^{N} V_i}{N}-1\right) \leq \displaystyle \sum_{i=0}^{N} V_i(V_i-1)$$
if and only if
$$\displaystyle \sum_{i=0}^{N} V_i \left(\frac{\displaystyle \sum_{i=0}^{N} V_i}{N}\right) \leq \displaystyle \sum_{i=0}^{N} V_i^2$$
if and only if
$$\left(\displaystyle \sum_{i=0}^{N} V_i\right)^2 \leq N\displaystyle \sum_{i=0}^{N} V_i^2$$

And this holds because of the Cauchy-Schwarz inequality. So it is enough to prove the upper bound for the mean value. Let us call $M$ to the mean value of the $V_i$s in this case, now we know that 

\begin{eqnarray*}
N \frac{M(M-1)}{2} &\leq& \frac{N(N-1)}{2}\\
M(M-1) &\leq& N-1\\
f(M) &\leq& N-1
\end{eqnarray*}

where $f(M) = M(M-1)$ as we defined before. Now we can conclude that if the maximum sum of the number of vertices of the $N$ cliques that edge-cover a graph $G$ on $N$ vertices is $T(N)$ then $T(N) \leq Nf^{-1}(N-1)$, so $B(N) = Nf^{-1}(N-1)$ is the upper bound that we wanted to find.

\end{proof}

\section{Values of $N$ where the bound holds}

Now we have an upperbound $B(N)$ for the sum of the number of vertices of the $N$ cliques, we will give examples where the bound is reached.

\begin{theorem}
If $P$ is a prime number, then for $N = P^2 + P + 1$ the bound can be reached taking $K_N$ as $G$.
\end{theorem}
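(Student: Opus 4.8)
The plan is to recognize that the upper bound $B(N) = Nf^{-1}(N-1)$ is attained exactly when two tightness conditions hold simultaneously: first, the Cauchy-Schwarz step must be an equality, which forces all the $V_i$ to be equal; and second, the edge-counting inequality $\sum V_i(V_i-1) \leq N(N-1)$ must be an equality, meaning the cliques are edge-disjoint and cover \emph{every} edge of $G$. Taking $G = K_N$ automatically saturates the edge count (there are exactly $\binom{N}{2}$ edges to cover), so the whole problem reduces to finding a partition of the edges of $K_N$ into $N$ edge-disjoint cliques that all have the same number of vertices. When $N = P^2+P+1$, solving $f(M) = M(M-1) = N-1 = P^2+P$ gives $M = P+1$, so the goal becomes: decompose the edges of $K_{P^2+P+1}$ into $P^2+P+1$ cliques, each on exactly $P+1$ vertices.

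First I would observe that this is precisely the defining combinatorial structure of a finite projective plane of order $P$. Such a plane has $P^2+P+1$ points and $P^2+P+1$ lines, each line containing exactly $P+1$ points, each point lying on $P+1$ lines, and any two distinct points lying on exactly one common line. The standard construction (for instance from the field $\mathbb{F}_P$, which exists since $P$ is prime, taking points and lines as one-dimensional and two-dimensional subspaces of $\mathbb{F}_P^3$) guarantees existence. I would then translate this into the clique-cover language: identify the $N = P^2+P+1$ vertices of $K_N$ with the points of the plane, and for each of the $N$ lines form the clique on the $P+1$ points incident to that line.

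The verification then proceeds in two short steps. Every edge $\{u,v\}$ of $K_N$ is covered, because the two points $u,v$ lie on exactly one common line, and that line's clique contains the edge; this same ``exactly one'' property shows the cover is \emph{edge-disjoint}, since no edge can belong to two different line-cliques. Hence the $N$ cliques, each with $P+1$ vertices, form an edge-disjoint clique cover of $K_N$, and the sum of their vertex counts is
\begin{eqnarray*}
\sum_{i=1}^{N} V_i &=& N(P+1) \\
&=& N f^{-1}(N-1) \\
&=& B(N),
\end{eqnarray*}
so the bound is reached.

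The main obstacle is not the combinatorial verification, which is routine once the projective-plane dictionary is in place, but rather being careful about the arithmetic that makes $f^{-1}(N-1) = P+1$ an integer exactly when $N = P^2+P+1$ — this is what forces equal-sized cliques to have $P+1$ vertices and is the reason the problem has a clean design-theoretic answer for these special $N$. I would also want to note explicitly that the existence of the projective plane of order $P$ relies on $P$ being a prime (a prime power would also suffice), so that the construction of the field $\mathbb{F}_P$ and hence of the incidence structure is guaranteed; this is the single external input the proof depends on.
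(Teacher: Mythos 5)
Your proof is correct. The underlying object is in fact the same one the paper uses, but the route to it is genuinely different: you recognize that an equal-size edge-disjoint clique decomposition of $K_{P^2+P+1}$ into $P^2+P+1$ cliques of size $P+1$ is exactly a projective plane of order $P$, and you invoke the standard existence result (points and lines as one- and two-dimensional subspaces of $\mathbb{F}_P^3$) as an external black box. The paper never mentions projective planes; instead it builds the same incidence structure by hand --- a special vertex $w$, $P+1$ groups of $P$ vertices (which amount to the affine plane $\mathbb{F}_P^2$ plus the points at infinity), ``type 1'' cliques through $w$ (vertical lines and the line at infinity) and ``type 2'' cliques cut out by the equations $Ai+j \equiv B \pmod P$ (the non-vertical lines closed up with their slope point) --- and then verifies edge-disjointness directly by solving the linear congruences, using primality of $P$ to invert $i_1 - i_2$. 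Your approach buys brevity and conceptual clarity, places the result in its natural design-theoretic context, and makes transparent that the construction extends to prime powers $P$ (which the paper's statement misses); the paper's approach buys self-containedness, requiring nothing beyond modular arithmetic, which matters in a short note whose audience may not have the existence of finite projective planes at hand. One small point in your favor that you should make explicit rather than leave implicit: the paper's verification of why every edge is covered (not just that the cliques are pairwise edge-disjoint) is somewhat glossed over, whereas your ``two points lie on exactly one common line'' axiom delivers covering and disjointness in a single stroke; a fully careful version of the paper's argument would need a counting step (there are $\binom{N}{2}$ edges in $K_N$ and the $N$ cliques contribute exactly $N\binom{P+1}{2} = \binom{N}{2}$ edge slots, so pairwise disjointness forces covering), and your formulation avoids needing it.
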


\begin{proof}

Let us take a prime $P$, and let $N = P^2 + P + 1$, and let us organize the vertices of $K_N$ in $P+1$ groups of $P$ vertices indexed $v_{i,j}$ with $0 \leq i \leq P$ and $0 \leq j < P$, and a special vertex $w$. We will take $P^2 + P + 1$ edge disjoint $K_{P+1}$s.

\begin{itemize}
\item Type 1 cliques: For each $i_0$ we will have a $K_{P+1}$ composed by $w$ and every $v_{i_0,j}$ for a total of $P+1$ cliques.
\item Type 2 cliques: For each $0 \leq A, B < P$ we will have a clique that contains $v_{i,j}$ if and only if $0 \leq i < P$ and $Ai+j = B \textrm{ mod } P$ or $i = P$ and $j = A$. This is a total of $P^2$ cliques.
\end{itemize}

It is easy to see that the type 1 cliques are edge disjoint, and that no type 1 clique has an edge in common with a type 2 clique, now let us see that no type 2 cliques have two vertices in common.

Let us see that if $v_{i_1,j_1}$ and $v_{i_2,j_2}$ ($i_1 \neq i_2$ and $j_1 \neq j_2$) belong to $K_{A_1,B_1}$ and $K_{A_2,B_2}$ then $A_1 = A_2$ and $B_1 = B_2$. If $i_1 = P$ then $A_1 = A_2 = j_1$ and because $i_2 \neq P$ then $B_1 = B_2$. The same happens if $i_2 = P$.

In case $i_1, i_2 < P$ then we have 

\begin{eqnarray*}
A_1 i_1 + j_1 &=& B_1\\
A_2 i_1 + j_1 &=& B_2\\
A_1 i_2 + j_2 &=& B_1\\
A_2 i_2 + j_2 &=& B_2
\end{eqnarray*}

Hence

\begin{eqnarray*}
A_1 i_1 + j_1 &=& A_1 i_2 + j_2\\
A_2 i_1 + j_1 &=& A_2 i_2 + j_2\\
A_1 (i_1-i_2) &=& A_2 (i_1-i_2)
\end{eqnarray*}

And because $p$ is prime we can divide by the inverse of $(i_1-i_2)$ in both sides (as we know it is not 0) so $A_1 = A_2$ and then it must hold $B_1 = B_2$.

This way we found a complete graph on $P^2 + P + 1$ vertices with $P^2 + P + 1$ edge disjoint $K_{P+1}$ for a total of $(P^2 + P + 1) (P+1)$ vertices.

Now as $N = P^2 + P + 1$ and $M = (P+1)$ is the number of vertices per each clique we must prove that $M(M-1) = N-1$ but this trivially holds.

\end{proof}

\section{Approaching the bound for every $N$}

Now let us call $T(N)$ to the maximum sum of vertices of $N$ edge disjoint cliques edge covering a graph on $N$ vertices. Let us prove that $\displaystyle \lim_{N \to \infty} \frac{T(N)}{B(N)} = 1$. First we will use the following two lemmas:

\begin{lemma}
For every $\epsilon > 0$ there exists a positive integer $N_0$ such that if $N > N_0$ there is a prime $P$ such that $N(1-\epsilon) < P < N$.
\end{lemma}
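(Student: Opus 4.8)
The plan is to recognize this as a statement about the relative size of prime gaps and to invoke the Prime Number Theorem (PNT), which I will assume in the form $\pi(x) \sim x/\ln x$, where $\pi(x)$ counts the primes not exceeding $x$. The assertion that some prime lies in the interval $(N(1-\epsilon), N)$ for all large $N$ is equivalent to saying that the count of primes in this window, namely $\pi(N) - \pi(N(1-\epsilon))$, is positive; I will in fact show it tends to infinity, which is far more than required.

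First I would write, using PNT,
$$\pi(N) = \frac{N}{\ln N}(1 + o(1)), \qquad \pi(N(1-\epsilon)) = \frac{(1-\epsilon)N}{\ln(N(1-\epsilon))}(1 + o(1)).$$
Since $\ln(N(1-\epsilon)) = \ln N + \ln(1-\epsilon)$ and $\ln(1-\epsilon)$ is a constant, we have $\ln(N(1-\epsilon))/\ln N \to 1$, so the second expression becomes $\frac{(1-\epsilon)N}{\ln N}(1 + o(1))$. Subtracting,
$$\pi(N) - \pi(N(1-\epsilon)) = \frac{N}{\ln N}\bigl(\epsilon + o(1)\bigr).$$
Because $\epsilon > 0$ is fixed, the bracketed quantity eventually exceeds $\epsilon/2$, so the whole expression diverges to $+\infty$. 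In particular it is strictly positive once $N$ is larger than some $N_0$, and for such $N$ there is at least one prime $P$ with $N(1-\epsilon) < P < N$, as claimed.

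The only delicate point is controlling the two $o(1)$ error terms: both $\pi(N)$ and $\pi(N(1-\epsilon))$ are of the same order $N/\ln N$, so a careless estimate could let the errors swamp the genuine gap $\epsilon N/\ln N$ between the leading terms. This is handled by noting that each error is $o(1)$ \emph{relative} to $N/\ln N$ while $\epsilon$ is a fixed constant, so for $N$ large the difference is provably positive. An alternative route that sidesteps the subtraction entirely is to use the corollary $p_{n+1}/p_n \to 1$ of PNT: taking $p_n$ to be the largest prime below $N$ gives $p_{n+1} \geq N$, and for $n$ large $p_n > (1-\epsilon)p_{n+1} \geq (1-\epsilon)N$, so $p_n$ itself is the desired prime once $N$, and hence the index $n$, is large enough.
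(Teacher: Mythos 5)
Your proposal is correct and follows essentially the same route as the paper: both invoke the Prime Number Theorem, use $\ln(N(1-\epsilon)) = \ln N + \ln(1-\epsilon)$ to show $\pi(N(1-\epsilon)) \sim (1-\epsilon)N/\ln N$, and conclude that $\pi(N) - \pi(N(1-\epsilon)) \sim \epsilon N/\ln N \to \infty$, so the window eventually contains a prime. The only hair worth splitting is that positivity of the count alone gives $P \leq N$ rather than $P < N$, but since you show the count diverges (the paper, for the same reason, asks for the difference to exceed $1$), the strict inequality follows.
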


\begin{proof}
The prime numbers theorem states that if $\pi(N)$ is the number of primes between 1 and $N$ then $\displaystyle \lim_{N \to \infty} \frac{\pi(N)}{\frac{N}{\ln N}} = 1$. Let us see that $\pi(N)-\pi(N(1-\epsilon))$ diverges so there is $N_0$ such that $\pi(N) - \pi(N(1-\epsilon)) > 1$ for every $N > N_0$.

\begin{eqnarray*}
\displaystyle \lim_{N\to\infty} \frac{\pi(N(1-\epsilon))}{\frac{N(1-\epsilon)}{\ln (N(1-\epsilon))}} &=& 1\\
\displaystyle \lim_{N\to\infty} \frac{\pi(N(1-\epsilon))}{\frac{N}{\ln (N(1-\epsilon))}} &=& 1-\epsilon\\
\displaystyle \lim_{N\to\infty} \frac{\pi(N(1-\epsilon))}{\frac{N}{\ln N} \frac{\ln N}{\ln N + \ln (1-\epsilon)}} &=& 1-\epsilon\\
\displaystyle \lim_{N\to\infty} \frac{\pi(N(1-\epsilon))}{\frac{N}{\ln N}} &=& 1-\epsilon\\
\displaystyle \lim_{N\to\infty} \frac{\pi(N) - \pi(N(1-\epsilon))}{\frac{N}{\ln N}} &=& \epsilon
\end{eqnarray*}

And as $\displaystyle \lim_{N\to\infty} \frac{N}{\ln N} = \infty$ then $\displaystyle \lim_{N\to\infty} \pi(N) - \pi(N(1-\epsilon)) = \infty$ too.

\end{proof}

\begin{lemma}
For every $\epsilon > 0$ there exists a positive integer $N_0$ such that if $N > N_0$ there is a prime $P$ such that $N(1-\epsilon) < P^2 + P + 1 < N$
\end{lemma}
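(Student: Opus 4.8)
The plan is to reduce this lemma to the previous one by a change of variable. Since $f(P)=P^2+P+1$ is strictly increasing for $P>0$, the condition $N(1-\epsilon) < P^2+P+1 < N$ is equivalent to requiring the prime $P$ to lie in the interval $(P_{\min}, P_{\max})$, where $P_{\max}=\frac{-1+\sqrt{4N-3}}{2}$ solves $P_{\max}^2+P_{\max}+1=N$ and $P_{\min}=\frac{-1+\sqrt{4N(1-\epsilon)-3}}{2}$ solves $P_{\min}^2+P_{\min}+1=N(1-\epsilon)$. Both endpoints grow like $\sqrt{N}$; more precisely $P_{\max}\sim\sqrt{N}$ and $P_{\min}\sim\sqrt{N(1-\epsilon)}$, so their ratio satisfies $\lim_{N\to\infty}\frac{P_{\min}}{P_{\max}}=\sqrt{1-\epsilon}$.

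First I would exploit the fact that $\sqrt{1-\epsilon}>1-\epsilon$ for $0<\epsilon<1$, which leaves slack between the two quantities. Fix any $\epsilon'$ with $1-\epsilon'$ strictly between $\sqrt{1-\epsilon}$ and $1$; for instance $1-\epsilon'=\frac{1+\sqrt{1-\epsilon}}{2}$. Because $\frac{P_{\min}}{P_{\max}}\to\sqrt{1-\epsilon}<1-\epsilon'$, there is an $N_1$ so that $P_{\min}<P_{\max}(1-\epsilon')$ for all $N>N_1$; that is, the shorter interval $(P_{\max}(1-\epsilon'),P_{\max})$ is contained in $(P_{\min},P_{\max})$.

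Then I would apply the previous lemma with $\epsilon'$ in the role of $\epsilon$ and with $P_{\max}$ (or, to stay with integer arguments, $\lfloor P_{\max}\rfloor$) in the role of the variable $N$. Since $P_{\max}\to\infty$ as $N\to\infty$, for $N$ large enough $P_{\max}$ exceeds the threshold supplied by that lemma, so there is a prime $P$ with $P_{\max}(1-\epsilon')<P<P_{\max}$. Taking $N_0$ to be the larger of $N_1$ and the $N$ needed to push $P_{\max}$ past that threshold, any such prime lies in $(P_{\min},P_{\max})$, and applying the increasing function $f$ yields $N(1-\epsilon)=f(P_{\min})<f(P)=P^2+P+1<f(P_{\max})=N$, as required.

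The step I expect to be the main obstacle is controlling the ratio $P_{\min}/P_{\max}$ and, in particular, making the choice of $\epsilon'$ cleanly independent of $N$. The inequality $\sqrt{1-\epsilon}>1-\epsilon$ is exactly what creates the necessary room, and once the two intervals are nested the remainder is a direct invocation of the previous lemma together with the monotonicity of $f$.
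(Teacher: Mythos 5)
Your proposal is correct; like the paper, it reduces the statement to Lemma 1, but it runs the reduction in the opposite direction, and the comparison is instructive. The paper goes forward: applying Lemma 1 with $\frac{\epsilon}{2}$ to consecutive primes gives $\frac{P_i}{P_{i+1}} > 1-\frac{\epsilon}{2}$ for all large $i$, and then $(1-\frac{\epsilon}{2})^2 > 1-\epsilon$ yields $\frac{P_i^2+P_i+1}{P_{i+1}^2+P_{i+1}+1} > 1-\epsilon$; the concluding squeeze --- given large $N$, take the largest prime $P_i$ with $P_i^2+P_i+1 < N$, so that $P_{i+1}^2+P_{i+1}+1 \geq N$ forces $P_i^2+P_i+1 > (1-\epsilon)N$ --- is left unstated in the paper. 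You instead pull the window $(N(1-\epsilon),N)$ back through the inverse of the increasing map $P \mapsto P^2+P+1$, observe that the ratio of the endpoints of the resulting prime window tends to $\sqrt{1-\epsilon}$, and invoke Lemma 1 with an intermediate $\epsilon'$ satisfying $\sqrt{1-\epsilon} < 1-\epsilon' < 1$. The mechanism is identical in substance: quadratic growth roughly halves the required relative gap, so both arguments hinge on the auxiliary epsilon satisfying $(1-\epsilon')^2 > 1-\epsilon$, the paper's choice being $\epsilon' = \frac{\epsilon}{2}$ and yours being $\epsilon' = 1 - \frac{1+\sqrt{1-\epsilon}}{2}$. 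What your version buys is completeness: it states explicitly where the prime is found and why its image lands in $(N(1-\epsilon),N)$, which is precisely the step the paper skips; what the paper's version buys is that it never needs the explicit formula for the inverse map or any floor bookkeeping. Two minor repairs to yours: treat $\epsilon \geq 1$ separately (the lower bound is then vacuous and any prime with $P^2+P+1<N$ works), since your formula for $\epsilon'$ presupposes $\epsilon < 1$; and if you apply Lemma 1 at $\lfloor P_{\max}\rfloor$ rather than at the real number $P_{\max}$, the left endpoint becomes $\lfloor P_{\max}\rfloor(1-\epsilon')$, which can undershoot $P_{\max}(1-\epsilon')$ by up to $1-\epsilon'$, so you should add that $P_{\max}(1-\epsilon')-P_{\min}\to\infty$ (both endpoints grow like $\sqrt{N}$ while their ratio stays bounded away from $1$), which provides the slack that makes the nesting of intervals survive the rounding.
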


\begin{proof}
Because of Lemma 1 we know that for every $\epsilon > 0$ there is a positive integer $i_0$ such that if $i > i_0$ and $P_i$ is the $i-th$ prime then $\frac{P_i}{P_{i+1}} > 1 - \frac{\epsilon}{2}$. Let us see that $\frac{P_i^2 + P_i + 1}{P_{i+1}^2 + P_{i+1} + 1} > 1 - \epsilon$.

We will use that $P_i > (1-\frac{\epsilon}{2}) P_{i+1}$ and that $(1-\frac{\epsilon}{2})^2 > 1 - \epsilon$

\begin{eqnarray*}
\frac{P_i^2 + P_i + 1}{P_{i+1}^2 + P_{i+1} + 1} &>& \frac{P_{i+1}^2 (1-\frac{\epsilon}{2})^2 + P_{i+1}(1-\frac{\epsilon}{2}) + 1}{P_{i+1}^2 + P_{i+1} + 1}\\
&>& \frac{P_{i+1}^2 (1-\epsilon) + P_{i+1}(1-\epsilon) + 1-\epsilon}{P_{i+1}^2 + P_{i+1} + 1}\\
&>& 1-\epsilon
\end{eqnarray*}

And this concludes our proof of Lemma 2.

\end{proof}

\begin{theorem} 
$\displaystyle \lim_{N \to \infty} \frac{T(N)}{B(N)} = 1$
\end{theorem}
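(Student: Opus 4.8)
The plan is to sandwich $\frac{T(N)}{B(N)}$ between a quantity tending to $1$ and $1$ itself. The upper bound is free: Theorem~1 already gives $T(N) \le B(N)$, so $\frac{T(N)}{B(N)} \le 1$ for every $N$ and hence $\limsup_{N\to\infty}\frac{T(N)}{B(N)} \le 1$. All the work therefore goes into a matching lower bound, and the idea is to transport the exact optima produced by Theorem~2 (which are available only at the sparse values $N' = P^2+P+1$) down to an arbitrary large $N$.

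First I would fix $\epsilon > 0$. By Lemma~2 there is an $N_0$ so that for every $N > N_0$ we may choose a prime $P$ with $N(1-\epsilon) < N' < N$, where $N' := P^2+P+1$. I would then build a cover of a graph on $N$ vertices as follows: place the optimal cover of $K_{N'}$ from Theorem~2 on $N'$ of the vertices — this uses exactly $N'$ edge-disjoint copies of $K_{P+1}$ and contributes $N'(P+1) = B(N')$ to the vertex sum — and make each of the remaining $N - N'$ vertices into its own edge-free single-vertex clique. This is a legal cover by exactly $N$ edge-disjoint cliques of a graph on $N$ vertices, so $T(N) \ge B(N') + (N-N') \ge B(N')$.

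It then remains to show that $\frac{B(N')}{B(N)}$ stays close to $1$. Writing $B(x) = x\,f^{-1}(x-1) = x\cdot\frac{1+\sqrt{4x-3}}{2}$ and using $N(1-\epsilon) < N' < N$ together with the monotonicity of $f^{-1}$, I would bound
\[
\frac{B(N')}{B(N)} \;>\; \frac{N'}{N}\cdot\frac{f^{-1}(N'-1)}{f^{-1}(N-1)} \;>\; (1-\epsilon)\cdot\frac{1+\sqrt{4N(1-\epsilon)-3}}{1+\sqrt{4N-3}},
\]
whose right-hand side tends to $(1-\epsilon)\sqrt{1-\epsilon} = (1-\epsilon)^{3/2}$ as $N \to \infty$. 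Combining with the previous paragraph gives $\liminf_{N\to\infty}\frac{T(N)}{B(N)} \ge (1-\epsilon)^{3/2}$ for every $\epsilon > 0$, and letting $\epsilon \to 0$ forces this $\liminf$ to be at least $1$. Together with the free upper bound this yields the claimed limit.

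The step I expect to require the most care is not any single computation but the bookkeeping that makes the transfer legitimate: confirming that Theorem~2 really supplies a cover using \emph{exactly} $N'$ cliques, so that padding with $N-N'$ trivial cliques lands on exactly $N$, and checking that the ``wasted'' $N - N'$ vertices cost nothing asymptotically. The underlying reason this succeeds is that $B(x) \sim x^{3/2}$, so discarding an $\epsilon$-fraction of the vertices only degrades the ratio by the harmless factor $(1-\epsilon)^{3/2}$, which disappears in the limit.
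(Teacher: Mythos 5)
Your proof is correct and follows essentially the same route as the paper's: Lemma~2 supplies a prime $P$ with $N(1-\epsilon) < P^2+P+1 < N$, Theorem~2's optimal cover of $K_{P^2+P+1}$ is transplanted into a graph on $N$ vertices, and the ratio $B(P^2+P+1)/B(N)$ is shown to tend to $1$ as $\epsilon \to 0$. Your write-up is in fact somewhat more careful than the paper's — you make explicit the padding by $N-(P^2+P+1)$ single-vertex cliques, you state the $\limsup \le 1$ half coming from Theorem~1, and you replace the paper's derivative estimate for $f^{-1}$ by the closed form $f^{-1}(x) = \frac{1+\sqrt{1+4x}}{2}$ — but these are refinements of the same argument, not a different one.
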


\begin{proof}

Using Lemma 2 and given $\epsilon > 0$, we can take the value of $N_0$ given by this lemma for $\frac{\epsilon}{2}$, so we know that we can cover a graph on $N$ vertices with cliques whose vertices sum up to at least $B(N(1-\frac{\epsilon}{2}))$ as $B$ is an increasing function, and there is a prime $P$ such that $N(1-\frac{\epsilon}{2}) < P^2 + P + 1 < N$ so $B(N(1-\frac{\epsilon}{2})) < B(P^2+2P+1)$.

Now let us prove that 

$$B(N(1-\frac{\epsilon}{2})) > B(N) (1-\epsilon)$$

First of all we can see that $(M-1)^2 < f(M) < M^2$ so $\sqrt{N} < f^{-1}(N) < \sqrt{N}+1$. We can also see as $f'(M) = 2M-1$ that $(f^{-1})'(M(M-1)) = \frac{1}{2M-1}$ so as $\displaystyle \lim_{M \to \infty} (f^{-1})'(M(M-1)) = 0$ then $\displaystyle \lim_{N \to \infty} (f^{-1})'(N) = 0$.

Now we see that

\begin{eqnarray*}
B(N(1-\frac{\epsilon}{2})) &>& B(N) (1-\epsilon)\\
N (1-\frac{\epsilon}{2}) f^{-1}(N(1-\frac{\epsilon}{2})-1) &>& N f^{-1}(N-1) (1-\epsilon)\\
(1-\frac{\epsilon}{2}) f^{-1}(N(1-\frac{\epsilon}{2})-1) &>& f^{-1}(N-1) (1-\epsilon)
\end{eqnarray*}

As $\displaystyle \lim_{N \to \infty} (f^{-1})'(N) = 0$ then we can take $N$ large enough so that 

$$f^{-1}(N(1-\frac{\epsilon}{2})-1) > f^{-1}(N-1) (1-\frac{\epsilon}{2})$$

So we just need to prove that 

\begin{eqnarray*}
(1 - \frac{\epsilon}{2})^2 &>& 1 - \epsilon \\
1 - \epsilon + \epsilon^2 &>&= 1 - \epsilon\\
\epsilon^2 &>& 0
\end{eqnarray*}

And this is trivially true, so that concludes our proof.

\end{proof}

\end{document}